\newcommand{\C}{\mathbb{C}}
\newcommand{\K}{\mathrm{K}}
\newcommand{\Z}{\mathbb{Z}}
\newcommand{\Q}{\mathbb{Q}}
\newcommand{\KK}{\mathrm{KK}}
\newcommand{\red}{\mathrm{r}}
\newcommand{\N}{\mathbb N}
\newcommand{\Comp}{\mathbb K}
\newcommand{\defeq}{:=}
\newtheorem{lemma}{Lemma}
\begin{document}
\author{Ralf Meyer}
\title{The Baum--Connes conjecture for extensions}

\begin{abstract}
  This note provides a counterexample showing that the assumptions
  that Chabert and Echterhoff have imposed in their permanence
  property of the Baum--Connes conjecture for group extensions cannot
  be simplified.
\end{abstract}
\thanks{I thank the referee for very useful suggestions to improve the
  manuscript.}
\maketitle

Chabert and Echterhoff showed in
\cite{Chabert-Echterhoff:Permanence}*{Theorem~3.3} that a discrete
group~$G$ with a normal subgroup~$N$ satisfies the Baum--Connes
conjecture with coefficients in a $G$-C*-algebra~$B$ if and only
if~$G/N$ satisfies the Baum--Connes conjecture with coefficients
$B\rtimes_\red N$ and any subgroup $K\subseteq G$ with $N\subseteq K$
and $[K:N]<\infty$ satisfies the Baum--Connes conjecture with
coefficients~$B$.  Recently, Zhang~\cite{Zhang:BC_extensions} claimed
that it is enough here to assume the Baum--Connes conjecture for~$N$
only instead of for all the subgroups~$K$ above.  This note shows by a
counterexample that this cannot be the case.  So the more complicated
sufficient condition by Chabert and Echterhoff is really needed and
cannot be made simpler.  The group in our case is just a direct
product of a countable discrete group by a finite cyclic group.  So an
intermediate claim about direct products in~\cite{Zhang:BC_extensions}
is also wrong.

It is known that there are counterexamples to the Baum--Connes
conjecture with coefficients.  That is, there is a countable discrete group~$G$
and a separable $G$-C*-algebra such that the Baum--Connes assembly map
for~$G$ with coefficients in~$A$ is not an isomorphism
(see~\cite{Higson-Lafforgue-Skandalis:Counterexamples}).  Let
$D\in \KK^G(P,\C)$ be a Dirac morphism for~$G$ as
in~\cite{Meyer-Nest:BC} and let~$N$ be a mapping cone for it.  As a
consequence, there is a long exact sequence
\begin{multline*}
  \dotsb \to
  \K_{*+1}\bigl((A\otimes N)\rtimes_\red G\bigr)
  \\ \to
  \K_*\bigl((A\otimes P)\rtimes_\red G\bigr) \xrightarrow{D_*}
  \K_*\bigl(A\rtimes_\red G\bigr) \to
  \K_*\bigl((A\otimes N)\rtimes_\red G\bigr) \to \dotsb
\end{multline*}
where the map~$D_*$ induced by~$D$ is the Baum--Connes assembly map.
Then the $\Z/2$-graded group $\K_*\bigl((A\otimes N)\rtimes_\red G\bigr)$
cannot be trivial because this would imply that the Baum--Connes
assembly map is an isomorphism.

\begin{lemma}
  \label{lem:get_p}
  There is a prime~$p$ so that
  $\K_*\bigl((A\otimes N)\rtimes_\red G\bigr)\otimes \Z[1/p]\neq0$.
\end{lemma}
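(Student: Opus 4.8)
The plan is to isolate the single piece of genuine K-theoretic input and reduce everything else to elementary algebra. Write $M \defeq \K_*\bigl((A\otimes N)\rtimes_\red G\bigr)$ for the $\Z/2$-graded abelian group in question. The discussion preceding the statement already supplies the only nontrivial ingredient: $M\neq 0$, since otherwise the long exact sequence would force $D_*$ to be an isomorphism, contradicting that $A$ is a counterexample to Baum--Connes with coefficients. So I would take $M\neq 0$ as given and reduce the lemma to the purely algebraic claim that \emph{every nonzero abelian group $M$ admits a prime $p$ with $M\otimes\Z[1/p]\neq 0$}.

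For that claim I would first identify the kernel of the localization map $M\to M\otimes\Z[1/p]$. Since $\Z[1/p]$ is the localization of~$\Z$ at the multiplicative set $\{1,p,p^2,\dots\}$, an element $m\in M$ maps to~$0$ if and only if $p^k m=0$ for some $k\ge 0$; that is, the kernel is exactly the subgroup of $p$-power torsion elements. Consequently $M\otimes\Z[1/p]=0$ precisely when every element of $M$ is $p$-power torsion, and in particular an element of infinite order is never killed.

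To finish, fix any nonzero $m\in M$, which exists because $M\neq 0$. If $m$ has infinite order, then $p^k m\neq 0$ for every $k$ and every prime, so $m$ survives in $M\otimes\Z[1/p]$ for all~$p$. If $m$ has finite order $n>1$, I would choose a prime $p\nmid n$ (possible as there are infinitely many primes); then $p^k m=0$ would force $n\mid p^k$, which is impossible, so again $m$ survives. In either case $M\otimes\Z[1/p]\neq 0$ for the chosen~$p$, proving the lemma.

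The step I expect to carry all the weight is the nonvanishing $M\neq 0$, and that is handed to us by the long exact sequence together with the existence of a counterexample to the Baum--Connes conjecture with coefficients; the remaining localization argument is routine. The only mild care needed is to respect the $\Z/2$-grading---but it suffices to run the argument in whichever of the two graded summands is nonzero---and to record the precise characterization of the localization kernel so that the choice of~$p$ is unambiguous.
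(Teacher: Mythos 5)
Your proposal is correct and follows essentially the same route as the paper: pick a nonzero element of $\K_*\bigl((A\otimes N)\rtimes_\red G\bigr)$, take $p$ arbitrary if it has infinite order and coprime to its order otherwise, and observe that the element survives in the tensor product with $\Z[1/p]$ because only $p$-power torsion is killed. The paper justifies this last point by writing $M\otimes\Z[1/p]$ as the inductive limit of multiplication-by-$p$ maps, while you quote the standard description of the localization kernel; these are two phrasings of the same fact.
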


\begin{proof}
  By assumption, there is a nonzero element
  $x\in\K_*\bigl((A\otimes N)\rtimes_\red G\bigr)$.  If its order is
  infinite, we may take~$p$ arbitrary.  If the order of~$x$ is finite,
  then we take~$p$ to be a prime that does not divide the order
  of~$x$.  Then $p^n x\neq 0$ for all $n\in\N$.  The tensor product
  commutes with inductive limits, and
  $\K_*\bigl((A\otimes N)\rtimes_\red G\bigr)\otimes \Z[1/p]$ is the
  inductive limit of the inductive system
  \[
    \K_*\bigl((A\otimes N)\rtimes_\red G\bigr)\xrightarrow{p\cdot {-}}
    \K_*\bigl((A\otimes N)\rtimes_\red G\bigr)\xrightarrow{p\cdot {-}}
    \K_*\bigl((A\otimes N)\rtimes_\red G\bigr)\xrightarrow{p\cdot {-}}
    \dotsb.
  \]
  Therefore, the condition $p^n x\neq 0$ for all $n\in\N$ implies that
  the image of~$x$ in
  $\K_*\bigl((A\otimes N)\rtimes_\red G\bigr)\otimes \Z[1/p]$ is not a zero
  element.  So this group is nontrivial.
\end{proof}

We fix a prime~$p$ as in the lemma.  Let~$\Gamma$ be the cyclic group
of order~$p$.  Actions of the group~$\Gamma$ on the stabilised Cuntz
algebra $B\defeq \mathcal{O}_2 \otimes \Comp(\ell^2\N)$ that belong to
the equivariant bootstrap class in $\KK^\Gamma$ are classified up to
$\KK^\Gamma$-equivalence in~\cite{Meyer:Actions_Kirchberg}.  Since the
$\K$-theory of~$\mathcal{O}_2$ vanishes, this is the rather simple
special case already treated in
\cite{Meyer:Actions_Kirchberg}*{Theorem~7.23}.  The main point for our
purposes here is that $\K_*(B\rtimes_\red \Gamma)$ may be any uniquely
$p$-divisible, $\Z/2$-graded module over the ring \(\Z[\vartheta]\),
where~\(\vartheta\) is a primitive \(p\)th root of unity.  This has
also been proven by Izumi (see
\cite{Izumi:Finite_group}*{Theorem~6.4}).  In particular, there is a
$\Gamma$-action on~$B$ in the equivariant bootstrap class with
\begin{equation}
  \label{eq:B_K-theory}
  \K_0(B\rtimes_\red \Gamma)\cong\Z[1/p,\vartheta],\qquad
  \K_1(B\rtimes_\red \Gamma)\cong 0.
\end{equation}
It follows that $B\rtimes_\red\Gamma$ belongs to the bootstrap class and,
therefore, also satisfies the K\"unneth formula.

Since $\Gamma$ is a finite group, the Dirac morphism
$D\in \KK^G(P,\C)$ is also a Dirac morphism for $G\times \Gamma$ if we
let~$\Gamma$ act trivially on everything.  Therefore, the Baum--Connes
assembly map for $G\times \Gamma$ with coefficients $A\otimes B$ is
the map on $\K$-theory induced by the map
\[
  (A\otimes P\otimes B)\rtimes_\red (\Gamma \times G)
  \xrightarrow{D_*}
  (A\otimes B)\rtimes_\red (\Gamma \times G).
\]
Its mapping cone has the $\K$-theory
\begin{align*}
  \K_*\bigl((A\otimes N\otimes B)\rtimes_\red (\Gamma \times G)\bigr)
  &\cong \K_*\bigl((A\otimes N)\rtimes_\red G \otimes (B\rtimes_\red \Gamma)\bigr)
  \\&\cong \K_*\bigl((A\otimes N)\rtimes_\red G) \otimes \Z[1/p,\vartheta].
\end{align*}
The second isomorphism uses the K\"unneth formula for
$B\rtimes_\red \Gamma$ and that $\Z[1/p,\vartheta]\subseteq \Q$ is
torsionfree.  Since \(\Z[1/p]\) is isomorphic to a direct summand in
\(\Z[1/p,\vartheta]\), Lemma~\ref{lem:get_p} implies that
\(\K_*\bigl((A\otimes N\otimes B)\rtimes_\red (\Gamma \times G)\bigr)
\neq0\).  Thus, the Baum--Connes assembly map for $G\times \Gamma$
with coefficients $A\otimes B$ is not an isomorphism.

However, the Baum--Connes assembly map for $G$ with coefficients
$A\otimes B$ is an isomorphism because the tensor factor~$B$ kills all
$\K$-theory by the K\"unneth theorem.  And the Baum--Connes assembly
map for~$\Gamma$ with any coefficients is an isomorphism
because~$\Gamma$ is finite.  Therefore, we have got a counterexample
for the statement in \cite{Zhang:BC_extensions}*{Theorem~3.13} about
the Baum--Connes conjecture for products of groups.  It is also a
counterexample for the statement in
\cite{Zhang:BC_extensions}*{Theorem~1.2} when we let the normal
subgroup in $\Gamma\times G$ be $\{1\}\times G$.  So both theorems are
false.

\begin{bibdiv}
  \begin{biblist}
    \bibselect{references}
  \end{biblist}
\end{bibdiv}
\end{document}